%
\documentclass{amsart}
\usepackage[latin1]{inputenc}
\usepackage{subfigure}
\usepackage{amsmath}
\usepackage{graphicx}
\newcommand{\Ne}{\operatorname{ne}} 
\newcommand{\Cr}{\operatorname{cr}} 
\newcommand{\NE}{\operatorname{N_n}} 
\newcommand{\CR}{\operatorname{C_n}} 
\newcommand{\Sn}{\mathfrak{S}_n}
\newcommand{\vtype}[1]{
\begin{minipage}[c]{0.6cm}\center\mbox{}\smallskip\\\includegraphics[scale=0.5]{#1.png}\\\mbox{}\end{minipage}}
\newtheorem{theorem}{Theorem}

\graphicspath{{Images/}}

\usepackage[round]{natbib}
\title{On $k$-crossings and $k$-nestings of permutations}
\author{Sophie Burrill%
     \and Marni Mishna%
     \and {Jacob Post}%
}
\address{%
 [SB, MM] Department of Mathematics, Simon Fraser University, Burnaby Canada, V5A 1S6;
 [JP] Department of Computer Science, Simon Fraser University, Burnaby Canada, V5A 1S6}

\keywords{crossing, nesting, permutation, enumeration}

\begin{document}
\begin{abstract}
We introduce $k$-crossings and $k$-nestings of permutations. We show that the crossing number and the nesting number of permutations have a symmetric joint distribution.  As a corollary, the number of $k$-noncrossing permutations is equal to the number of $k$-nonnesting permutations. We also provide some enumerative results for $k$-noncrossing permutations for some values of $k$. 
\end{abstract}
\maketitle

\section{Introduction}
Nestings and crossings are equidistributed in many combinatorial objects, such as matchings, set partitions, permutations, and embedded labelled graphs~\cite{Chetal07, Corteel07, deMi07}. More surprising is the symmetric joint distribution of the crossing and nesting numbers:  A set of $k$~arcs forms a $k$-crossing (nesting) if each of the $\binom{k}{2}$ pairs of  arcs cross (nest).  The crossing number of an object is the largest $k$ for which there is a $k$-crossing, and the nesting number is defined similarly.  Chen \emph{ et al.}~ \cite{Chetal07} proved the symmetric joint distribution of the nesting and crossing  numbers for set partitions and matchings. Although they describe explicit involutions, they do not use simple local operations on the partitions. De Mier~\cite{deMi07}, interpreted the work of Krattenthaler~\cite{Kratt06} to show that $k$-crossings and $k$-nestings satisfy a similar distribution in embedded labelled graphs. 

A hole in this family of results is the extension of $k$-crossings and $k$-nestings in permutations. This note fills this gap. We also give exact enumerative formulas for permutations of size $n$ with crossing numbers $1$ (non-crossing) and $\lceil n/2\rceil$.

\section{Introducing $k$-crossings and $k$-nestings of permutations}
\subsection{Crossings and nestings}
The \emph{ arc annotated sequence}  associated to the permutation $\sigma\in\Sn$ is the directed graph on the vertex set $V(\sigma)=\{1,\dots, n\}$ with arc set $A(\sigma)=\{(a, \sigma(a)): 1\leq a\leq n\}$, drawn in a particular way. It is also known as the standard representation, or simply, the arc diagram.  It is embedded in the plane by drawing an increasing linear sequence of the vertices, with edges $(a, \sigma(a))$ satisfying  $a\leq \sigma(a)$ drawn above the vertices (the upper arcs), and the remaining lower arcs satisfying $a>\sigma(a)$ drawn below. We refer to this graph as $A_\sigma$; the subgraph induced by the upper arcs and  $V(\sigma)$ is $A_\sigma^{+}$; and the subgraph induced by the lower arcs and $V(\sigma)$ is $A_\sigma^{-}$. Additionally, we reverse the orientation of the arcs in $A_\sigma^{-}$, and view it as a classic arc diagram above the horizon.  Because of these rules, the direction of the arcs is determined, and hence we simplify our drawings by not showing arrows on the arcs.  

These two subgraphs are arc diagrams in their own right: for example $A_\sigma^{-}$ represents a set partition, and $A_\sigma^+$ is a set partition with some additional loops. For convenience purposes, we draw $A_{\sigma}^{-}$ flipped above the axis, so that it resembles other arc diagrams.

\begin{figure}\center
\large{$A_\sigma=$} \includegraphics[width=6cm]{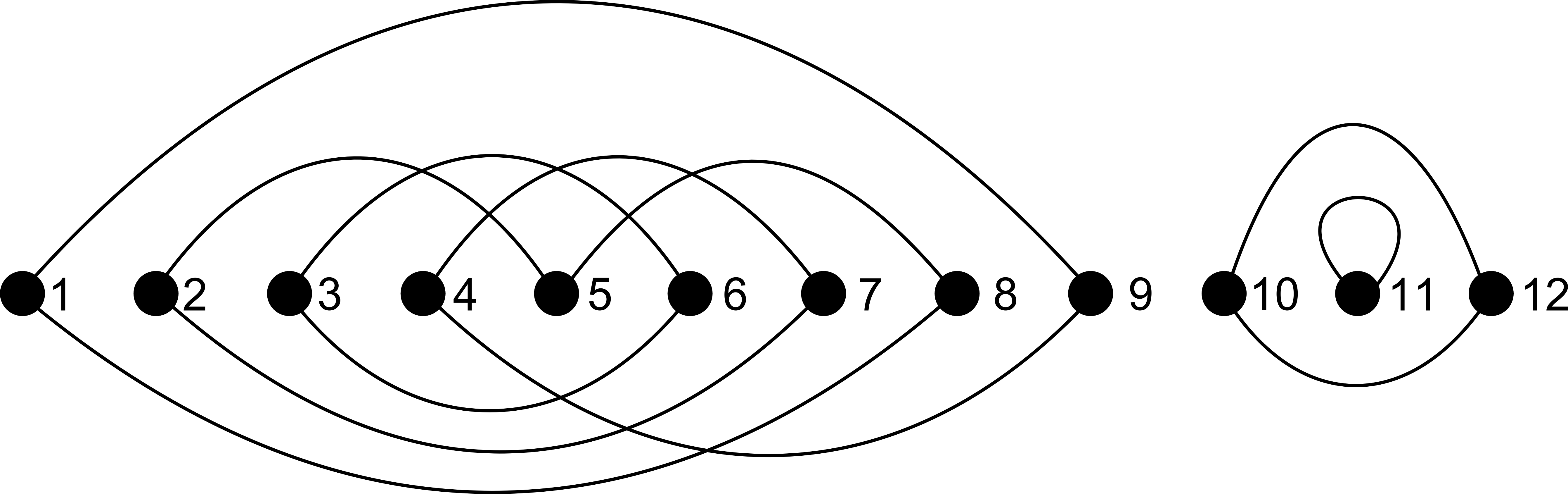} \\ \bigskip
\large{$A_{\sigma^{+}}=$} \includegraphics[width=6cm]{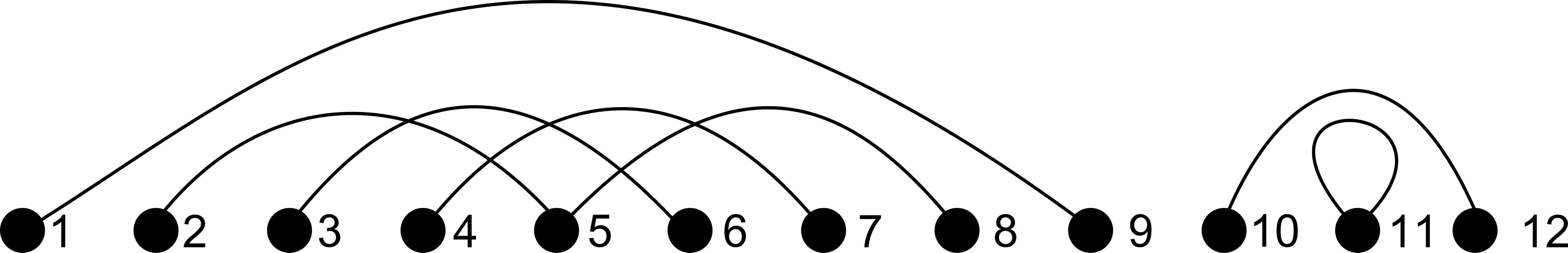}\\ \bigskip
\large{$A_{\sigma^{-}}=$}  \includegraphics[width=6cm]{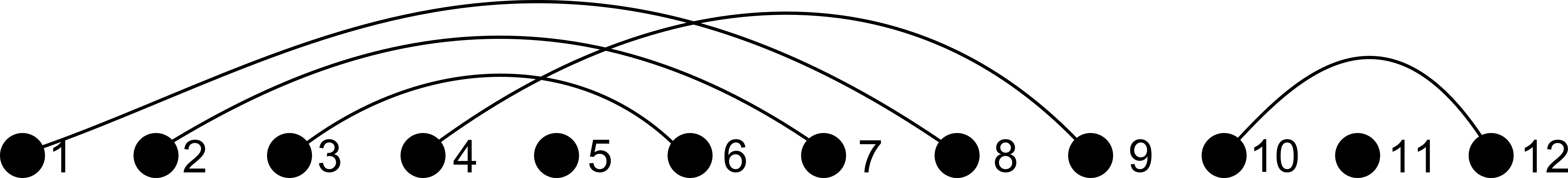}\\ \bigskip

\caption{An arc diagram representation for the permutation \mbox{$\sigma=[9\,5\,6\,7\,8\,3\,2\,1\,4\,12\,11\,10]$}, and its decomposition into upper and lower arc diagrams $(A_\sigma^{+}, A_\sigma^{-})$. In this example,  $\Cr(\sigma)=4$, $\Ne(\sigma)=3$, and the degree sequence is given by $D_{\sigma}={ (1,0)(1,0)(1,0)(1,0)(1,1)(0,1)(0,1)(0,1)(0,1)(1,0)(1,1)(0,1)}.$}
\end{figure}

Crossings and nestings are defined for permutations by considering the upper and lower arcs separately. A crossing is a pair of arcs $\left\{(a,\sigma(a)),(b, \sigma(b))\right\}$ satisfying either $a<b\leq \sigma(a)<\sigma(b)$ (an upper crossing) or $\sigma(a)<\sigma(b)<a<b$ (a lower crossing). A nesting is a pair of arcs $(a,\sigma(a))$ $(b, \sigma(b))$ satisfying $a<b\leq \sigma(b)< \sigma(a)$ (an upper nesting) or $\sigma(a)<\sigma(b)<b<a$ (a lower nesting).

There is a slight dissymmetry to the treatment of upper and lower arcs in this definition which we shall see is inconsequential. However, the reader should recall that what is considered a crossing (nesting) in the upper diagram is elsewhere called an \emph{enhanced} crossing (resp. enhanced nesting). 

Crossings and nestings were defined in this way by Corteel~\cite{Corteel07} because they represent better known permutation statistics. Corteel's Theorem 1 states that the number of top arcs in this representation of a permutation is equal to the number of weak exceedences, the number of arcs on the bottom is the number of descents, each crossing is equivalent to an occurrence of the pattern $2-31$, and each nesting is an occurrence of the pattern $31-2$. Corteel's Proposition 4 states nestings and crossings occur in equal number across all permutations of length $n$.

\subsection{$k$-nestings and $k$-crossings}
To generalize her work we define $k$-crossings and $k$-nestings in the same spirit as set partitions and matchings. A $k$-crossing in a permutation ard diagram $A_\sigma$ is a set of~$k$ arcs~$\{(a_i, \sigma(a_i)):1\leq i\leq k\}$ that satisfy either the relation $a_1<a_2<\dots<a_k<\sigma(a_1)<\sigma(a_2)<\dots<\sigma(a_k)$ (upper $k$-crossing)
or $\sigma(a_1)<\sigma(a_2)<\dots<\sigma(a_k)<a_1<a_2<\dots<a_k$ (lower $k$-crossing). Similarly, a $k$-nesting is a set of $k$ arcs $\{(a_i, \sigma(a_i)):1\leq i\leq n\}$ that satisfy either the relation $a_1<a_2<\dots<a_k<\sigma(a_k)<\dots<\sigma(a_2)<\sigma(a_k)$ (upper $k$-nesting)
or $\sigma(a_1)<\sigma(a_2)<\dots<\sigma(a_k)<a_k<\dots<a_2<a_1$ (lower $k$-nesting). 

The \emph{ crossing number\/} of a permutation $\sigma$, denoted by $\Cr(\sigma)$, is the size of the largest~$k$ such that $A_\sigma$ contains a $k$-crossing. In this case we also say $\sigma$ is $k+1$-noncrossing. Likewise, the nesting number of a permutation $\Ne(\sigma)$ is the size of the largest nesting in $A_\sigma$, and define $k+1$-noncrossing similarly. Occasionally we consider the top and lower diagrams in their own right as \emph{ graphs}, and then we use the definition of deMier~\cite{deMi07}, and hence distinguish separately the enhanced crossing number of the \emph{ graph} $A_\sigma^+$ denoted $\Cr^*(A_\sigma^+)$ from the permutation crossing number, and likewise for the enhanced nesting number $\Ne^*$.  The number of permutations of $\Sn$ with crossing number equal to $k$ is $\CR(k)$, and we likewise define $\NE(k)$ for nestings.

The \emph{ degree sequence} $D_g $ of a graph $g$ is the sequence of indegree and outdegrees of the vertices, when considered as a directed graph:
\[D_g \equiv(D_g(i))_i=\left(\operatorname{indegree}_{g}(i), \operatorname{outdegree}_{g}(i)\right)_{i=1}^n.\] Some sources call these left-right degree sequences since in other arc diagrams the incoming arcs always come in on the left, and the outgoing arcs go out to the right. 
As a graph, the degree sequence of a permutation is trivial: $(1,1)^n$, since a permutation is a map in which every point has a unique image, and a unique pre-image. To define a more useful entity, we define the degree sequence of a permutation to be the degree sequence of only the upper arc diagram: $D_\sigma\equiv D_{A_\sigma^+}$. The degree sequence defined by the lower arc diagram can be computed coordinate-wise directly from the upper by simple transformations given in Table~\ref{tab:vtypes}, and we denote this sequence $\overline{D_\sigma}$. (The sums of the vertex degrees is not (1,1) because the lower arcs have their orientation reversed, and hence the indegree, and the out degree have switched)
An example is in Figure 1. The vertices with degree $(0,1)$  are called ``openers'' and those with degree $(1,0)$ are ``closers''. 

\begin{table}\label{tab:vtypes}
\center
\begin{tabular}{|l|c|c|l|}\hline
{\bf Type} & {\bf vertex $i$} & $D_{\sigma}(i)$ &$\overline{D_{\sigma}}(i)$ \\\hline
opener & \vtype{opener} &(1,0) & (1,0)\\ \hline
closer   & \vtype{closer}  &(0,1) & (0,1)\\ \hline
loop  & \vtype{loop} & (1,1) &  (0,0)\\\hline
 upper transient & \vtype{trans1} &(1,1) &  (0,0)\\ \hline
lower transient & \vtype{trans2} &(0,0) &  (1,1)\\ \hline
\end{tabular}

\smallskip
\caption{\it The five vertex types that appear in permutations, and their associated upper degree value, and lower degree value. }

\end{table}
The main theorem can now be stated. 

\begin{theorem} \label{thm:main}
Let $NC_n(i,j, D)$ be the number of permutations of~$n$ with crossing number $i$, nesting number $j$, and left-right degree sequence specified by $D$. Then 
\begin{equation}
NC_n(i,j, D)=NC_n(j,i, D).
\end{equation}
\end{theorem}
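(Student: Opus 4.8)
The plan is to reduce the statement about permutations to the known symmetric joint distribution results for set partitions (and enhanced set partitions), exploiting the decomposition $A_\sigma = (A_\sigma^{+}, A_\sigma^{-})$ described above. The crucial structural observation is that a $k$-crossing or $k$-nesting of a permutation lives \emph{entirely} in the upper diagram or \emph{entirely} in the lower diagram: an upper arc and a lower arc can never together form a crossing or nesting pair, because the relevant inequalities in the definitions force all participating arcs to be of the same type. Consequently $\Cr(\sigma) = \max\{\Cr^*(A_\sigma^{+}), \Cr(A_\sigma^{-})\}$ and $\Ne(\sigma) = \max\{\Ne^*(A_\sigma^{+}), \Ne(A_\sigma^{-})\}$, where the upper diagram is read with the \emph{enhanced} convention (since $a \le \sigma(a)$ permits loops/equality) and the lower diagram with the ordinary convention. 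Thus the permutation crossing and nesting numbers are determined by the two component set partitions separately.

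Next I would apply the symmetric joint distribution theorem of Chen \emph{et al.}~\cite{Chetal07} to each component. Their result provides, for set partitions on a fixed ground set with a prescribed set of openers and closers, an involution that swaps the crossing number and the nesting number while \emph{preserving} the left-right degree sequence (equivalently, preserving which vertices are openers, closers, transients, etc.). The enhanced version needed for $A_\sigma^{+}$ is exactly the enhanced-partition statement covered by the same circle of results; this is where interpreting the upper arcs as an enhanced set partition (a partition allowing loops) is essential. I would therefore take the involution $\phi^{+}$ on enhanced partitions and the involution $\phi^{-}$ on ordinary partitions, each of which exchanges crossing and nesting numbers and fixes the degree sequence, and assemble them into a single involution $\Phi(\sigma) = (\phi^{+}(A_\sigma^{+}), \phi^{-}(A_\sigma^{-}))$ on permutations.

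The main point to verify is that $\Phi$ is well-defined as a map \emph{back to permutations} and that it realizes the required swap of the pair $(i,j)$ while fixing $D$. Well-definedness holds precisely because each component involution preserves the degree sequence of its component: a pair $(A^{+}, A^{-})$ of arc diagrams reassembles into a genuine permutation if and only if the opener/closer/transient pattern is compatible, i.e.\ iff the combined indegree and outdegree at every vertex is $(1,1)$ as a permutation requires, and this compatibility is a function only of $D_\sigma$ and $\overline{D_\sigma}$ (related coordinate-wise by Table~\ref{tab:vtypes}). Since $\phi^{+}$ and $\phi^{-}$ fix $D_{A_\sigma^{+}}$ and $D_{A_\sigma^{-}}$ respectively, the reassembled object has the same degree data and is again a permutation with the same $D$. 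Because $\Cr$ and $\Ne$ are the coordinatewise maxima over the two components, and each component swaps its own crossing and nesting numbers, the global maxima are swapped as well, giving $\Cr(\Phi(\sigma)) = \Ne(\sigma)$ and $\Ne(\Phi(\sigma)) = \Cr(\sigma)$. This yields a bijection between permutations counted by $NC_n(i,j,D)$ and those counted by $NC_n(j,i,D)$, proving the theorem.

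The step I expect to be the genuine obstacle is the claim that the maxima combine correctly and that no crossing or nesting straddles the two diagrams. One must argue carefully that applying \emph{independent} involutions to the two blocks does not create spurious cross-type configurations and that the largest $k$-crossing of $\sigma$ truly equals the larger of the two component crossing numbers (and likewise for nestings); this rests on the earlier observation that the defining inequalities segregate upper from lower arcs, so I would make that segregation lemma explicit first and lean on it throughout. A secondary subtlety is ensuring the correct \emph{enhanced} variant is invoked for the upper diagram, since the weak inequality $a \le \sigma(a)$ allows fixed points to appear as loops, matching the enhanced-partition setting rather than the ordinary one.
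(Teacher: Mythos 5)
Your proposal is correct and is essentially the proof in the paper: decompose $A_\sigma$ into $(A_\sigma^{+},A_\sigma^{-})$, apply to each component a degree-sequence-preserving involution that swaps crossing and nesting numbers, reassemble the resulting pair into a unique permutation because the preserved degree data forces every vertex to have combined indegree and outdegree $(1,1)$, and conclude via $\Cr(\sigma)=\max\{\Cr^*(A_\sigma^{+}),\Cr(A_\sigma^{-})\}$ together with its nesting analogue. The only difference is which black box supplies the componentwise involution: the paper invokes de Mier's theorem~\cite{deMi07} on graphs with fixed left-right degree sequence, handling the enhanced statistics of $A_\sigma^{+}$ by inflating loops and transient vertices into proper vertices, whereas you invoke the partition results of Chen \emph{et al.}~\cite{Chetal07} (ordinary and enhanced versions) directly; since the paper itself remarks that Krattenthaler's involution reduces to that of Chen \emph{et al.} on set partitions, this is an immaterial variation rather than a genuinely different route.
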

There is an explicit involution behind this enumerative result. 
\subsection{Preliminary enumerative results}
The number of permutations of $\Sn$ with crossing number equal to $k$ is directly computable for small values of~$n$ and~$k$. 
\begin{table}[h!]\center
\begin{tabular}{c|rrrrrrrrr}
$n\backslash k$ & 1 & 2 & 3 & 4 & 5\\ \hline
1&1\\
2& 2 &\\
3& 5& 1\\
4 & 14 & 10 &   \\
5 & 42 & 76&2 \\
6 & 132& 543& 45 \\
7 & 429& 3904& 701& 6 \\
8 & 1430& 29034&9623& 233\\
9 & 4862& 225753& 126327& 5914& 24\\
\end{tabular}
\smallskip
\caption{\it $\CR(k)$: The number of permutations of $\Sn$ with crossing number $k$. A crossing number of 1 is equivalent to non-crossing. }
\label{tab:enum}
\end{table}

We immediately notice the first column of Table~\ref{tab:enum}, the non-crossing permutations,  are counted by Catalan numbers: $\CR(1)=\frac{1}{n+1}\binom{2n}{n}$. This has a simple explanation: non-crossing \emph{ partitions\/} have long been known to be counted by Catalan numbers and there is a simple bijection between non-crossing permutations and non-crossing partitions. 
Essentially, to go from a non-crossing permutation to a non-crossing partition, flip the arc diagram upside down, convert the loops to fixed points,  and then remove the lower arcs. This defines a unique set  partition, and is easy to reverse. This bijection is easy to formalize, but it is not the main topic of this note. 
\section{Enumeration of maximum nestings and crossings }
To get a sense of how Theorem 1 is proved, and to obtain some new enumerative results, we consider the set of maximum nestings and crossings. A \emph{maximum nesting} is the largest possible: a $\lceil n/2\rceil$-nesting is maximum in a permutation on~$n$ elements. We can compute $\NE( \lceil n/2\rceil )$ explicitly. 

\begin{theorem}
The number of permutations with a maximum nesting satisfies the following formula:
\begin{equation}
  \NE( \lceil n/2\rceil )= \left\{  \begin{array}{ll} 
                                                    m! & n=2m+1\\
                                                    2(m+1)!-(m-1)!-1 & n=2m\\
                                                    \end{array}.
                 \right.
\end{equation}
\end{theorem}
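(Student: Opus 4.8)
The plan is to treat the two parities separately, in both cases exploiting the fact that a maximum nesting is so large that it must occupy almost every vertex, which rigidifies the permutation. Throughout I would record a nesting by its set of left endpoints (sources) $S=\{a_1<\dots<a_k\}$ and right endpoints (targets) $T$. Since the arcs of a nesting pair the $i$-th smallest source with the $i$-th largest target, a nesting is completely determined by the pair $(S,T)$ together with $\max S\le\min T$; a classic (loop-free) nesting has $S\cap T=\emptyset$, while one with a central loop has $S\cap T=\{\max S\}=\{\min T\}$. The key preliminary observation is that a classic $k$-nesting occupies $2k$ vertices and a looped one occupies $2k-1$, and that only upper nestings can carry a central loop (lower arcs are strict).

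For odd $n=2m+1$ the maximum is an $(m+1)$-nesting. A classic one would need $2m+2>n$ vertices, so a maximum nesting must be an upper nesting with a central loop, occupying all $2m+1$ vertices. First I would observe that this forces $S=\{1,\dots,m+1\}$ and $T=\{m+1,\dots,2m+1\}$, hence $\sigma(i)=2m+2-i$ for $i\le m$ and $\sigma(m+1)=m+1$; the images of $\{m+2,\dots,2m+1\}$ then form an arbitrary bijection onto $\{1,\dots,m\}$. Because this top half is forced identically by \emph{every} maximum nesting, there is no over-counting, and the count is exactly the $m!$ choices for the bottom, giving $\NE(\lceil n/2\rceil)=m!$.

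For even $n=2m$ the maximum is an $m$-nesting, and I would write $\NE(m)=|A\cup B|=|A|+|B|-|A\cap B|$, where $A$ (resp.\ $B$) is the set of permutations carrying an upper (resp.\ lower) $m$-nesting. Because lower nestings are classic, a lower $m$-nesting occupies all $2m$ vertices and forces the bottom reversal $\sigma(j)=2m+1-j$ for $j>m$, with the top half free; hence $|B|=m!$. Intersecting with $A$ forces the top half also to be decreasing, i.e.\ $\sigma$ is the full reversal, so $|A\cap B|=1$. The enumeration of $A$ is the crux: the $(S,T)$ bookkeeping shows there is a unique classic upper type and exactly $2m$ looped types, split into a family $A_{P_c}$ (central loop at $m$) indexed by the missing target $c\in\{m+1,\dots,2m\}$ and a family $A_{Q_d}$ (central loop at $m+1$) indexed by the missing source $d\in\{1,\dots,m\}$; each type pins $\sigma$ on an $m$-element source set and leaves $m!$ free completions.

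The main obstacle is that one permutation may contain several of these nestings, so I must control overlaps rather than just sum $m!$ over the $2m+1$ types. I expect to prove three disjointness/overlap facts: the classic type together with all $A_{P_c}$ are pairwise incompatible, since they pin $\sigma$ on $\{1,\dots,m\}$ to distinct values, contributing $(m+1)\cdot m!=(m+1)!$; the $A_{Q_d}$ are pairwise incompatible, since any two assign conflicting values on their shared sources, contributing $m\cdot m!$; and the only surviving cross-overlap is $A_{P_{m+1}}\cap A_{Q_m}$, which pins $\sigma$ on $\{1,\dots,m+1\}$ and leaves $(m-1)!$ completions (every other $Q$/$P$ pair, and every $Q/C$ pair, collides at the loops $m,m+1$). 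Combining, $|A|=(m+1)!+m\cdot m!-(m-1)!=2(m+1)!-m!-(m-1)!$, whence $|A\cup B|=2(m+1)!-(m-1)!-1$. The delicate part is verifying that these are the only compatibilities, which I would do by comparing, for each pair of types, the forced value of $\sigma$ at the shared source positions and exhibiting a collision whenever incompatibility is claimed.
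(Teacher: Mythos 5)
Your proof is correct, and its skeleton matches the paper's: the odd case is handled identically (a maximum nesting must be a looped upper nesting using every vertex, forcing the top half and leaving $m!$ completions), and the even case uses the same decomposition into classic upper, lower, and looped upper $m$-nestings, with the same two corrections ($-1$ for the full reversal and $-(m-1)!$ for the permutations with two central loops). Where you genuinely differ is in the crux, the count of permutations whose maximum nesting is a looped upper nesting. The paper counts these by a constructive insertion argument: take one of the $(m-1)!$ maximum-nested permutations on $2m-1$ points (reusing the odd case), choose one of $2m$ positions for an extra vertex and one of $m$ ways to attach it, giving $2m\cdot m!$, then subtract an asserted double count of $(m-1)!$. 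You instead classify the looped nestings explicitly by their missing vertex --- the $m$ types $P_c$ with loop at $m$ and the $m$ types $Q_d$ with loop at $m+1$ --- and run inclusion--exclusion over all $2m+1$ upper types. Your route is longer but makes rigorous exactly what the paper leaves implicit: that the classification is exhaustive, that the types pairwise collide except for $A_{P_{m+1}}\cap A_{Q_m}$, and hence that the only overcounting is the two-loop family of size $(m-1)!$. One local repair is needed: your stated reason that the $A_{Q_d}$ are pairwise disjoint (``conflicting values on shared sources'') fails for adjacent indices $d$ and $d+1$, where the shared sources in fact receive identical values; the real conflict there is that the sources $d$ and $d+1$ are both forced to the value $2m+1-d$, contradicting injectivity of $\sigma$. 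The disjointness claim itself is true, so this is a fixable slip rather than a gap, and your final arithmetic $|A\cup B| = \bigl(2(m+1)!-m!-(m-1)!\bigr)+m!-1 = 2(m+1)!-(m-1)!-1$ agrees with the theorem.
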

\begin{proof}
We divide the result into a few cases, but each one is resolved the same way: For each permutation $\sigma\in\Sn$ with a maximum nesting, the $ \lceil n/2\rceil $-nesting comes from either $A_\sigma^+$ or $A_\sigma^{-}$, and in most cases defines that subgraph. Once one side is fixed, and there is a given degree sequence, it is straightforward to compute the number of ways to place the remaining arcs. Some cases are over counted, and tallying these gives the final result.

\subsubsection*{Odd $n$: $n=2m+1$} To achieve an $m+1$-nesting, it must be an enhanced nesting in the upper arc diagram, and it uses all vertices, including a loop: $\sigma(i)=n-i: 1\leq i\leq m$. It remains to define $\sigma(i)$ for $m<i\leq n$. The lower degree sequence is fixed, and so $1\leq \sigma(i)<m$ for each~$i$, but other than that there is no restriction. Thus, there are $m!$ possibilities.

\subsubsection*{Even $n$: $n=2m$} The even case is slightly more complicated, owing to the fact that three different ways to achieve an $m$-nesting:
\begin{description}
\item[An $m$-nesting in $A_\sigma^+$]
These permutations satisfy $\sigma(i)=n-i, 1\leq i
\leq m$. As before, there are $m!$ ways to define $\sigma(i), m<i\leq n$. 

\item[An $m$-nesting in $A_\sigma^{-}$]
These permutations satisfy $\sigma(n-i)=i, 1\leq i\leq m$. Again, there are $m!$ possibilities to define $\sigma(i), 1<i\leq m$. Only the  involution $[n\, n-1\, \dots 2\, 1]$ is in the intersection of these sets. 

\item[An enhanced $m$-nesting in $A_\sigma^{+}$]
If the $m$-nesting uses only $2m-1$ vertices, there is one left over. It must either be a lower transient vertex, or a loop since there is nothing left to connect to it. 
We count these by considering the different ways to construct it from a smaller permutation diagram. Suppose we have a 
permutation with an $m$-nesting on $2m-1$ vertices. By the first part, we know there are $(m-1)!$ of these.
We place it on $2m$ points, by first selecting our special vertex
$i$, and placing the permutation on the rest. There are $2m$ ways to
pick this special vertex. Finally, we create the new permutation $\sigma$ by
connecting the new vertex to the rest of the structure. We choose a
point $j$ to be the value $\sigma(i)$. We can choose $i$ and thus $i$
is a loop. Otherwise, $j$ must be before the loop in $\sigma'$. We then set $\sigma^{-1}(i)$
to be $\sigma'^{-1}(j)$. There are $m$ choices for $\sigma(i)$. 

\item[Over counting] we have counted twice the family of diagrams with two loops in
the center. There are $(m-1)!$ of these. 
\end{description}
Putting all of the pieces
together, and simplifying the expression we get the formula: \[\operatorname{N}_{2m}(m)= 2(m+1)!-(m-1)!-1.\]
\end{proof}

This proof suggests a direct involution on the permutation which switches a maximum nesting for a maximum crossing, since the degree sequence of a nesting and a crossing have the same shape. Thus the formula for maximum crossings is the same. This involution might send a $k$-crossing to a $k+1$-nesting, and so it can not be used for general $k$. 

\subsection{Other enumerative questions} 
From the formula, we see that a very smal proportion of permutations have maximum crossings, ($\leq 2\frac{n+1}{2}!/n!$)
or are non-crossing $\approx 4^nn^{-3/2}/n!$. What can be said of the nature of the distribution, or the average crossing number? What is the nature of the generating function $P(z;u)$ where $u$ marks the crossing number, or even simply the generating function for $k$-noncrossing permutations? Bousquet-M\'elou and Xin~\cite{BoXi05} consider this question for permutations: 2-noncrossing partitions are counted by Catalan numbers, (as we mentioned before), and thus the generating function is algebraic; the counting sequence for 3-noncrossing partitions is P-recursive, and so the generating function is D-finite, and they conjecture that the generating function for $k$-noncrossing partitions, $k>3$ are likely not D-finite. How can these results be adapted to permutations, given the similar structure?

\section{Proof of main theorem}
The proof of the main theorem first decomposes a permutation into its upper and lower arc diagrams and then applies the results for set partitions separately to each part. 

\begin{proof}
We can apply a result of deMier~\cite[Theorem 3.3]{deMi07} almost directly to prove our result by describing an involution $\Psi:\Sn\rightarrow\Sn$ which preserves the left-right degree sequence and swaps nesting and crossing number. That is, $D_\sigma=D_{\Psi(\sigma)}$, and $\Ne(\sigma)=\Cr(\Psi(\sigma))$, $\Cr(\sigma)=\Ne(\Psi(\sigma))$. This involution happens on the Ferrer's diagram, and is described in detail by Krattenthaler~\cite{Kratt06}.  

Her Theorem 3.3 states that any left-right degree sequence $D$, the number of $k$- 
noncrossing graphs on $D$ equals the number of $k$-nonnesting graphs on $D$. Actually,  she proves more: the number of graphs with crossing number $i$ and nesting number $j$ have a symmetric distribution across all graphs with a given fixed degree sequence. 

In order to apply her results, the first step is re-write $A_\sigma^+$ so that we only consider proper crossings and nestings instead of enhanced crossing and nestings. This is a common trick, known as inflation. Essentially, we create the graph $g$ from $A_\sigma^+$ by adding some supplementary vertices to eliminate loops and transitory vertices:\\
\begin{center}
\includegraphics[height=2cm]{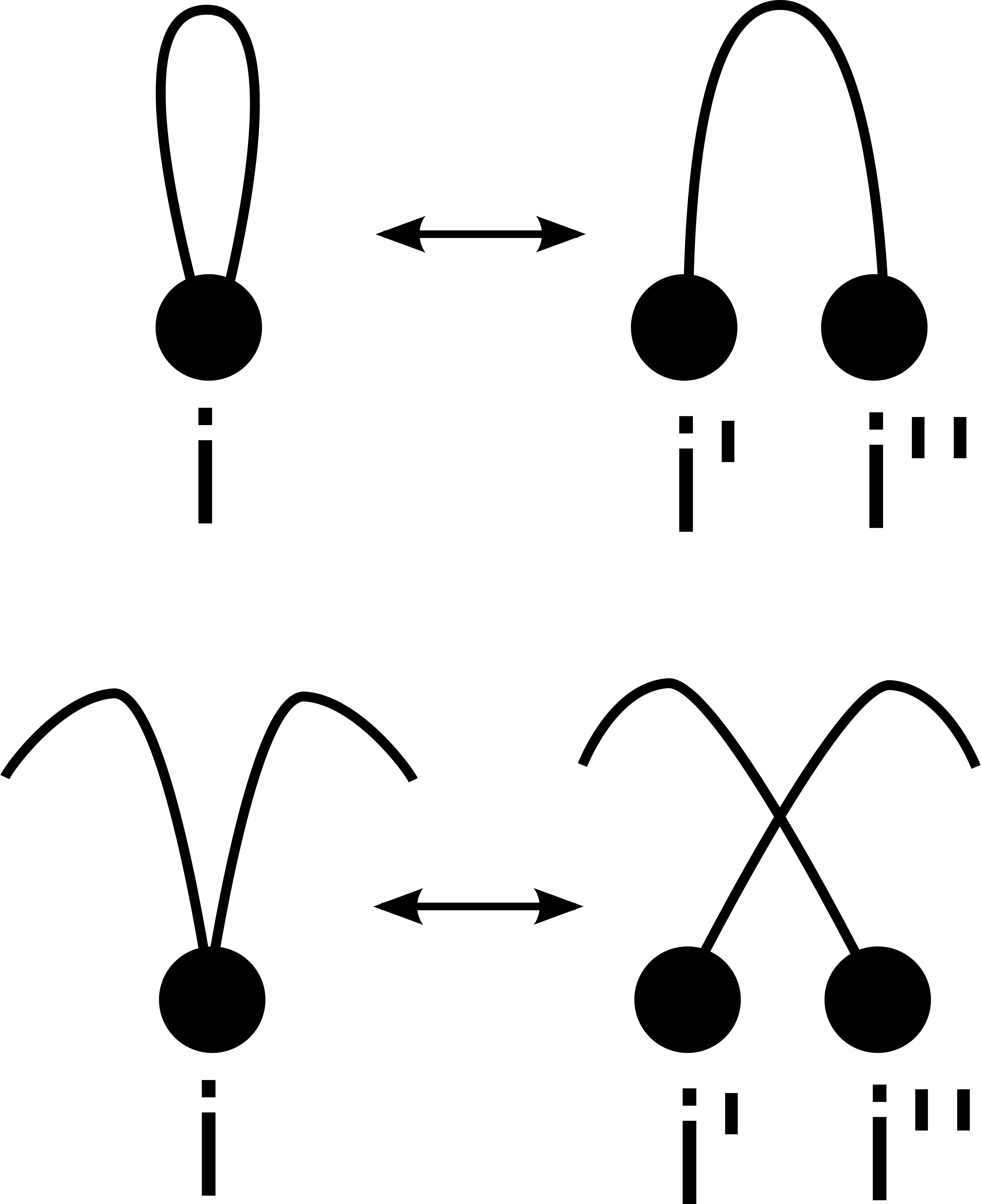}
\end{center}

Now each nesting and crossing is proper, and by~\cite[Lemma 3.4]{deMi07} $\Ne^*(A_\sigma^+)=\Ne(g)$ and $\Cr^*(A_\sigma^+)=\Cr(g)$. 

In the course of her proof, she relies on an involution of Krattenthaler~\cite{Kratt06} on fillings of Ferrer's diagrams. Let $\Psi$ be the map on embedded labelled graphs described implicitly in her proof. Because $\Psi$ is a left-right degree preserving map, we can identify the supplementary vertices in $\Psi(g)$ to get a graph with the correct kind of vertices. Call this new graph $g'$. We now extend the definition of $\Psi$ to $A_\sigma^+$ by $\Psi(A_\sigma^+)\equiv g'.$

Consider the pair of graphs $(\Psi(A_\sigma^+), \Psi(A_\sigma^{-}))$.

Proving our main theorem now reduces to showing that there is a unique $\tau\in\Sn$ such that $A_\tau=(\Psi(A_\sigma^+), \Psi(A_\sigma^{-}))$, which we do next.
For every vertex in $A_\tau$ the indegree and the outdegree are equal to one. This is because the left-right degree sequence of both the top and the bottom are preserved in the map, and hence the vector sum of their degree sequence is unchanged, i.e. $(1,1)^n$, and has all the correct partial sum properties.  The map is a bijection and so $\tau$ is unique. 

This map swaps the upper nesting and the upper crossing number, and also the lower nesting and the lower crossing number. Thus
$\Cr(\tau)=\max\{\Cr^*(A_\tau^+),\Cr(A_\tau^{-})\}=\max \{ \Ne^*(A_\sigma^+), \Ne(A_\sigma^{-}) \}=\Ne(\sigma)$. Thus, the crossing and the nesting number are switched under the map $\Psi$.  
\end{proof}

\subsection{Other proofs of this result} Set partitions can be viewed as an instance of fillings of Ferrer's diagrams, and Krattenthaler's involution reduces to the involutions that Chen \emph{ et al.} describe to prove their~\cite[Theorem 1]{Chetal07}.

Figure~\ref{fig:psisigma} illustrates our involution on an example. Remark that the degree sequence is fixed.
\begin{figure}[h!]\center 
\large{$A_\sigma=$} \includegraphics[width=6cm]{perm.png} \\ \bigskip
\large{$A_{\Psi(\sigma)}=$} \includegraphics[width=6cm]{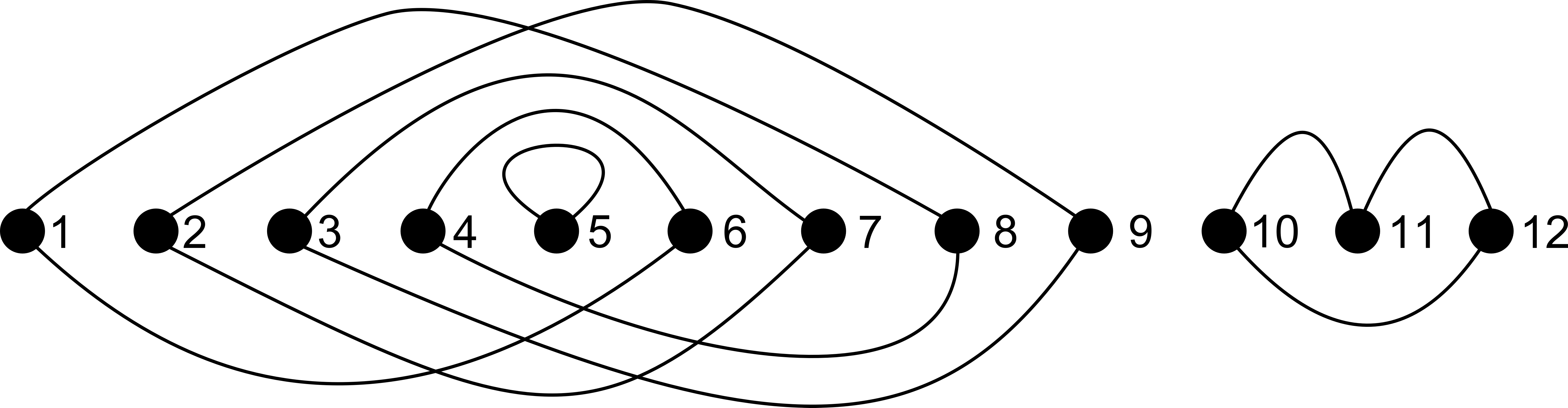} \\ \bigskip
\caption{\it The permutation $\sigma$ and its image in the involution $\Psi(\sigma)$. Note that $\Ne(\Psi(\sigma))=4$, $\Cr(\Psi(\sigma))=3$.}
\label{fig:psisigma}
\end{figure} 
 
\subsection{Equidistribution in permutation subclasses}
Involutions are in bijection with partial matchings, and have thus been previously considered. What of other subclasses of permutations? The map presented here does not fix involutions, because loops are mapped to upper transient vertices, but it does fix any class that is closed under degree sequence, for example, permutations with no lower transitory vertices, or permutations with no upper transitory vertices nor loops. These conditions have interpretations in terms of other permutation statistics, if we consider the initial motivations of Corteel. 

\section{Conclusions and open questions}
The main open question, aside from the enumerative, and probabilistic questions we have already raised, is to find a direct permutation description of our involution, i.e. a description avoiding the passage through tableaux or fillings of Ferrer's diagrams. Is this involution already part of the vast canon of permutation automorphisms?  

Which subclasses of permutations preserve the symmetric distribution? From our example, we remark that cycle type is not neccesarily conserved (since loops are always mapped to upper transitory vertices), but non-intersecting intervals are preserved. Involution permutations are in bijection with partial matchings, and so this subclass has this property. 

Is there an interpretation of  crossing and nesting numbers in terms of other permutations statistics? Which other statistics does this involution preserve? 

Ultimately we have considered a type of graph with two edge colours and strict degree restrictions. Can this be generalized to a larger class of graphs with fewer degree restrictions?
What of a generalization of graphs with multiple edge colours?


\subsection*{Acknowledgements} The authors thank Cedric Chauve, Lily Yen, Sylvie Corteel and Eric Fusy for useful discussions. SB and MM are both funded by NSERC (Canada), and MM did part of this work as part of a CNRS Poste Rouge (France).

\end{document}